\documentclass[11pt, oneside]{article} %

\usepackage[margin=.7in]{geometry}

\usepackage{graphicx}
\usepackage{multicol,multirow}
\usepackage{amsmath,amssymb,amsfonts}
\usepackage{mathrsfs}
\usepackage{amsthm}
\usepackage[T1]{fontenc}
\usepackage{stmaryrd}

\usepackage{comment}

\newtheorem{theorem}{Theorem}[section]

\newtheorem{proposition}[theorem]{Proposition}
\newtheorem{cor}[theorem]{Corollary}

\newtheorem{definition}{Definition}
\theoremstyle{definition}

\numberwithin{equation}{section}

\newcommand{\R}{\mathbb{R}}

\newcommand{\N}{\mathbb{N}}

\newcommand{\dif}{\mathrm{d}}
\DeclareMathOperator{\supp}{supp}

\title{Smoothing by, and eccentric smoothing of, compactly supported RBFs}

\author{T. Hangelbroek\thanks{Department of Mathematics,University of Hawai`i -- M\=anoa}, 
C. Rieger\thanks{Department of Mathematics and Computer Science, Philipps-Universit\"at Marburg}
}

\begin{document}
\maketitle

\abstract{We consider compactly supported RBFs having algebraically decaying Fourier transforms. Here we focus especially on generalized Wendland RBFs and their modification by making them smoother away from zero, a process we call
eccentric smoothing.
Specifically, we consider mapping properties
of  the integral operators (sometimes known as covariance operator) for these novel type RBFs. Moreover, we show that eccentric smoothing makes wavelet-inspired compression technique for the kernel matrix feasible.
\let\thefootnote\relax\footnotetext{2020 \emph{Mathematics Subject Classification}. 65D12, 65F55, 47G30 .}
\let\thefootnote\relax\footnotetext{\emph{Key words and phrases}. Compactly supported RBFs , 
Samplet compression, 
Pseudodifferential operators, 
Smoothing integral operator .}
}

\section{Introduction}\label{sec:Intro}
A  compactly supported RBF
is  radially symmetric function $\phi:\R^d\to \R$, with $\mathrm{supp}\,\phi\subset B(0,1)$
which is positive definite: meaning
that for any finite set $X=\{x_1,\dots x_n\}$, the collocation
matrix $\Phi_X =\bigl(\phi(x_j-x_k)\bigr)_{j,k\le n}$
is strictly positive definite.

In this article, we consider compactly supported RBFs  which have 
have finite global smoothness $\phi\in C^{\lambda_1}(\R^d)$.
Our main object of study is to consider  the
(higher) smoothness away from the origin: 
$\phi\in C^{\lambda_2}\bigl(\R^d \setminus \{0\}\bigr)$ for some $\lambda_2>\lambda_1$, possibly infinity.
When $\lambda_2$ is finite, we are especially interested in the case  that it is sharp (so that $\phi\notin C^{\lambda_2+\epsilon}\bigl(\R^d \setminus \{0\}\bigr)$ for $\epsilon>0$).
This functions as a useful secondary parameter for the RBF,
which we call the eccentric smoothing parameter.

The advantage of an increased eccentric smoothing parameter was noted in \cite{HR-Extending}, where it was used to obtaining high order convergence rates  in high order Sobolev norms for approximation and
interpolation with compactly supported RBFs.
There are other advantages.

\paragraph{Compression of the collocation matrix}
Recently, the compressibility of the matrix $\Phi_X$, which is full (despite the compact support of $\phi$)
and becomes ill-conditioned for large or poorly arranged point sets $X$, has been a favorable outcome of 
the study of {\em samplets}.

For certain RBFs, 
 after a change of basis (represented by the matrix $\mathcal{T}$), the collocation matrix
$\Phi_X^{\Sigma}= \mathcal{T}\Phi_X \mathcal{T}^{-1}$ can be rendered
sparse with little cost. The ability to compress the matrix relies on the smoothness of the kernel away from the origin.

Specifically, it relies on the {\em asymptotic smoothness} of the kernel.
This is a concept  which has migrated from the theory of compression of $\mathcal{H}$ 
matrices \cite[Section 4.3.4]{Hackbusch}.
The original condition is very strong condition and compactly supported functions cannot satisfy it.
However, a weaker version, namely asymptotic smoothness of finite order, has been 
considered for positive definite kernels, 
where it is key to producing wavelet-like bases
 (i.e., samplets)
 for kernel spaces in \cite{HMSS,HM}.  
 Indeed,  for globally supported RBFs, this type of compression has been used
in \cite{AKMW}
to produce an effective multiscale algorithm for RBF interpolation with 
computational cost $\mathcal{O}(N\log N)$, where $N=\Xi$, and $\Xi$ is assumed to be {\em quasi-uniform}.
 This notion of asymptotic smoothness provides sufficient conditions  
 for compressibility of the transformed collocation matrix -- we 
 expect that the smoothness away from the origin provided by Proposition \ref{fattening}
 will allow compactly supported kernels to be used effectively in this setting.

\paragraph{Regularity of the integral operator}
The  integral operator of $\phi$ 
plays an important role in kernel approximation, machine learning  and statistics
\cite{CS,LGZ, Berlinet},
the square root of the operator acting on $L_2(\sigma)$ is the kernel's  ``native space'', the eigenvectors
of the operator are fundamental to  Mercer  and Karhunen-Lo{\`e}ve expansions (see \cite[Ch. 3]{CS}
\cite[Ch. 2.3]{Berlinet} for background). The range
of the operator on $L_2(\sigma)$ is the doubling class considered in \cite{schaback_doubling}.

When $\phi$ is compactly supported,
convolution with tempered distributions 
is well defined and  continuous,
and the action
 $\phi*:\mathcal{S}'(\R^d)\to \mathcal{S}'(\R^d):f\mapsto \phi*f$ can be described by way of the Fourier transform (it is a multiplier).
However, if $\lambda_2$ is finite,
its {\em symbol} $a:(x,\xi) \mapsto \hat{\phi}(\xi)$, considered as a map on $\R^d\times \R^d$,
  does not belong to any of the H{\"o}rmander  symbol classes 
$$S_{\rho,\delta}^{\tau}(\R^d)=
\bigl\{
a\in C^{\infty}(\R^d\times \R^d)
\mid
 (\forall \alpha, \beta)\, (\exists C_{\alpha,\beta})\  |D_x^{\beta}D_{\xi}^{\alpha} a(x,\xi)| \le  C_{\alpha,\beta}
  \llbracket \xi\rrbracket^{\tau - \rho|\alpha|+\delta||\beta|}
\bigr\}.$$
with $0\le \delta<\rho\le 1$
(see \cite[Ch. 7]{Taylor2} for background).
Here we  use $\llbracket \xi\rrbracket := (1+|\xi|^2)^{1/2}$.
In short, the integral operator fails to be a H{\"o}rmander class  pseudodifferential operator.
This fact  can be verified in  the following two ways:
\begin{enumerate}
\item  if $j$ is sufficiently large, then
 $x\mapsto |x|^{2j} \phi(x)$ is  in $C^{\lambda_2}(\R^d)$, 
but not smoother. Thus  for any $j>(\lambda_2-\lambda_1)/2$, $|\Delta_{\xi}^{j}\widehat{\phi}(\xi)|$ cannot decay like
$|\xi|^{-\mu}$ for any $\mu> \lambda_1+d+1$.
\item if $\phi*$ were a pseudodifferential operator with symbol in $S_{\rho,\delta}^{\tau}(\R^d)$
with $\rho>0$ and $\delta<1$, then it would have the {\em pseudolocal property}: for any
compactly supported distribution $T$,
$\mathrm{sing\, supp} (\phi* T) \subset \mathrm{sing\, supp} (T)$ (see \cite[Ch 7.2]{Taylor2}). 
This fails because
$$\mathrm{sing\, supp}(\phi* \delta) = \mathrm{sing\, supp}( \phi) = \{0\} \cup \partial B(0,1) \supsetneq \{0\} =\mathrm{sing\, supp} (\delta).$$
\end{enumerate}
As a consequence, understanding the mapping properties of 
 $\phi*$  on 
 Sobolev spaces 
$W_p^s(\R^d)$ when $p\neq 2$
(or other classes like Besov and Triebel-Lizorkin spaces), or of the map
$f\mapsto \int_{\R^d} f(y) \phi(x-y) \dif\sigma(y)$
 on $L_p(\sigma)$ when $\sigma$ is a finite Borel measure
remains difficult. 
Both reduce to understanding how $\phi*$ acts on tempered distributions. This stands in sharp contrast to other well-known
(globally supported) RBFs having analytically similar Fourier transforms, such as Mat{\'e}rn kernels and surface splines, which induce pseudodifferential operators \cite{HRW}.

\paragraph{Outline}
In this brief article, we demonstrate how to modify, simply, 
standard constructions of compactly supported RBFs to produce 
compactly supported
RBFs with similar analytic properties
but which are smooth away from origin.
This is Proposition \ref{fattening}.
This yields kernels with {\em asymptotic smoothness} of finite order.
 The infinitely smooth kernels constructed using Proposition \ref{fattening} 
 have integral operators which 
 are properly supported
 pseudo-differential operators with symbols in the H{\"o}rmander class.
 This is demonstrated in Proposition \ref{symbol}. As a consequence, 
 these integral operators have prescribed smoothing properties.

\section{Compactly supported RBFs}
There are a number of constructions of compactly supported positive definite functions:
\cite{Buhmann, Johnson, Schaback, Wend, Wu}.
See also \cite{HubJag} for a recent  overview. 

In this article, we focus on a 
general framework which includes
 Wendland's original compactly supported RBFS
introduced in \cite{Wend} and further discussed in \cite[Chapter 9]{Wendland_book}
as well as
``generalized Wendland"   functions, 
introduced and considered in \cite{ChHub}.
 We exploit the analytic properties collected in
\cite[Section 5.2]{HR-Extending}, which can also be readily found in the literature (specifically in \cite{ChHub} 
and \cite[Chapter 10]{Wendland_book}).

A continuous, positive definite RBF  $\phi:\R^d\to \R$ is polyhomogeneous 
if there is a univariate polynomial $p$ so that
\begin{equation}
\label{Wend}
\phi(x) = \begin{cases}p(|x|), &|x|< 1\\ 0&|x|\ge 1.\end{cases}
\end{equation}
By splitting $p$ into even and odd parts and   expanding the odd part in monomials, we
have 
$$p(r)= \sum_{\ell=0}^M c_{\ell} r^{{\lambda_1} + 1+2\ell} + q(r^2),$$
for some univariate polynomial $q$, integers $0\le \lambda_1\le M$ and coefficients $c_{\ell}$ 
with $c_0\neq 0$. In this case, $\lambda_1$ is the {\em global smoothness} of $\phi$ in that
$\phi\in C^{{\lambda_1},1}(\R^d)$ (the partial derivatives of order ${\lambda_1}$  are Lipschitz);
but $\phi\notin C^{{\lambda_1}+1}(\R^d)$.

Define $\R^d_* := \R^d\setminus\{0\}$. A consequence of (\ref{Wend}) is that   there is 
a positive integer $\lambda_2$, the {\em eccentric smoothness}, for which
$\phi|_{\R^d_*}\in C^{{\lambda_2}}(\R^d_*)\setminus C^{{\lambda_2}+1}(\R^d_*)$.
By \cite[Lemma 9.10]{Wendland_book}, one has $\lambda_2>\lambda_1+\lfloor d/2\rfloor$.
This brings us to a general form for compactly supported RBFs we will use throughout this note:
\begin{definition} 
\label{gen_wend}
Let $\phi:\R^d\to \R$ be a compactly supported RBF satisfying (\ref{Wend}) with global smoothness $\lambda_1$,
and {\em eccentric smoothness}   $\lambda_2>\lambda_1 + \lfloor d/2\rfloor$.
Suppose furthermore that  there are constants $0<C_1\le C_2<\infty$ so that
\begin{equation}
\label{fourier}
C_1 \llbracket \xi \rrbracket^{-2t}\le \widehat{\phi}(\xi) \le C_2  \llbracket \xi \rrbracket^{-2t}
\end{equation}
with $ t := ({\lambda_1}+d+1)/2$.
\end{definition}
The original compactly supported RBFs of Wendland given
in \cite{Wend}
and further discussed in
\cite[Chapter 9]{Wendland_book}
satisfy the requirements of Definition \ref{gen_wend}
with $\lambda_2 = \lambda_1 +\lfloor\frac{d-1}{2}\rfloor$.
Constructions 
with larger values of $\lambda_2$
 can be found in \cite[Section 5.2]{HR-Extending} and  \cite{ChHub}.
In particular,
the class of {\em generalized Wendland functions} $\phi_{\mu, \alpha}$ 
defined in \cite[Eqn. 1.3]{ChHub}
satisfy
(\ref{Wend}) in \cite[Theorem 3.1]{ChHub}, for certain integer values of $\mu$ and $\alpha$.
They satisfy 
(\ref{fourier}) in \cite[Theorem 2.3]{ChHub} for fairly general choices of parameters $\mu, \alpha$.

 It follows from the definition that  ${\lambda_1}$ determines the order of the RBF in the sense that 
the reproducing kernel Hilbert space (the so-called native space)
it generates via the Moore-Aronszajn theorem
 is $W_2^t(\R^d)$.
It is shown in \cite[Section 5.2]{HR-Extending} that by considering longer series,
i.e., for greater polynomial degrees $\deg(p)$,
 in (\ref{Wend}), it is possible to increase the 
smoothness ${\lambda_2}$ near the support boundary boundary without changing 
the kernel's core 
analytic characteristics:
the global smoothness ${\lambda_1}$,
the Fourier decay parameter 
$t=({\lambda_1}+d+1)/2$,
or the native space $W_2^t(\R^d)$.

\section{Eccentric smoothing of compactly supported RBFs}

There are other ways to  increase $\lambda_2$ 
 without sacrificing 
the core analytic characteristics
-- we call this {\em eccentric smoothing} and consider it and its benefits below.

The construction below requires 
an RBF
$\phi$ satisfying 
Definition \ref{gen_wend}
and another, smoother, compactly supported RBF  $\tau$ which is otherwise arbitrary (it can be any
of the ones considered in \cite{Buhmann, Johnson, Schaback, Wend, Wu}
or any other method, provided the support and smoothness conditions are satisfied).

\begin{proposition}
\label{fattening}
Suppose $\phi$
is a positive definite RBF
satisfying Definition \ref{gen_wend}
with global smoothness $\lambda_1$ 
and eccentric smoothness
${\lambda_2}\ge 2t=\lambda_1+1+d$.
Suppose furthermore that $\tau$ 
is a positive definite RBF with 
support in $B(0,1)$: 
\begin{enumerate}
\item If 
$\tau \in C^k(\R^d)$ 
with 
$k\ge \max(2t,\lambda_2)$,
and $\int_{\R^d} \tau(x) \dif x >0$,
then 
$\tilde{\phi} 
= 
\tau  \phi$  
lies in 
$C^{{\lambda_1},1}(\R^d)$,
with 
$\tilde{ \phi}|_{\R_*^d}
\in 
C^{k}(\R_*^d)$;
\item If $\tau$ has the form given in 
Definition \ref{gen_wend}, 
with global smoothness 
$\kappa_1\ge\lambda_1$
and eccentric smoothness $\kappa_2$,
then 
$\tilde{\phi} = \tau  \phi$ 
lies in $C^{{\lambda_1},1}(\R^d)$,
with 
$\tilde{ \phi}|_{\R_*^d}
\in 
C^{\kappa_2+{\lambda_2}+1}(\R_*^d)
$.
 \end{enumerate}
In each case, 
the Fourier transform of 
$\tilde{\phi}$
satisfies
$$
\tilde{C}_1\llbracket \xi\rrbracket^{-2t}
\le 
\widehat{(\tau\phi)}(\xi)  
\le \tilde{C}_2\llbracket \xi\rrbracket^{-2t}
$$
for some $0 < C_1\le C_2$, 
and 
$\tilde{\phi}$ is positive definite.
\end{proposition}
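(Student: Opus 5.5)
Positive definiteness comes for free: $\widehat{\tau\phi} = (2\pi)^{-d}\,\widehat{\tau}*\widehat{\phi}$ (with the paper's normalization of the Fourier transform). Both factors are nonnegative and integrable, and $\widehat{\phi}>0$ everywhere by (\ref{fourier}), so $\widehat{\tau\phi}$ is strictly positive.

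Alternatively, the Schur product of the positive definite matrices $\tau(x_j-x_k)$ and $\phi(x_j-x_k)$ is positive definite. Strictness holds because $\phi$ is strictly positive definite and the diagonal of $\tau$'s matrix is $\tau(0)>0$.

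\textbf{Smoothness.} The product $\tau\phi$ is radial and supported in $B(0,1)$.

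\begin{itemize}
\item \emph{Near the origin.} In case 1, $\tau\in C^k$ with $k\ge\lambda_1+1$ and $\phi\in C^{\lambda_1,1}$, so the Leibniz rule gives $\tau\phi\in C^{\lambda_1,1}$.
\item \emph{On the open annulus $0<|x|<1$.} Here $\phi=p(|x|)$ is $C^\infty$, so smoothness is limited by $\tau$ alone, i.e.\ it is $C^k$.
\item \emph{Near the sphere $|x|=1$.} Every derivative of $\tau$ of order at most $k$ vanishes on $|x|=1$, since $\tau\in C^k(\R^d)$ is supported in $B(0,1)$. By Taylor's theorem, $\partial^\beta\tau(x)=O\bigl((1-|x|)^{k-|\beta|}\bigr)$ there. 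Since $\phi\in C^{\lambda_2}$ near the sphere, for $|\alpha|\le k$ the Leibniz terms $\partial^\beta\tau\,\partial^{\alpha-\beta}\phi$ are continuous and vanish on the sphere. Terms with $|\alpha-\beta|\le \lambda_2$ are immediate. When $|\alpha-\beta|>\lambda_2$, I use $\phi(x)=(1-|x|)_+^{\lambda_2+1}\cdot(\text{smooth})$ near $|x|=1$, which follows from the polynomial form and sharp eccentric smoothness. The derivative of $\phi$ is then $O\bigl((1-|x|)^{\lambda_2+1-|\alpha-\beta|}\bigr)$, and the total exponent is $k+\lambda_2+1-|\alpha|>0$. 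Hence $\tau\phi\in C^k(\R^d_*)$.
\item \emph{Case 2.} Both factors are piecewise polynomial in $r=|x|$, vanishing to orders $\kappa_2+1$ and $\lambda_2+1$ at $r=1$. Their product is $(1-r)_+^{\kappa_2+\lambda_2+2}$ times a polynomial, which gives $C^{\kappa_2+\lambda_2+1}$ away from $0$. Near $0$, write each factor as an odd part plus $q(r^2)$. The product's least odd power is $r^{\lambda_1+1}$, with coefficient $c_0\tau(0)\ne 0$ (if $\kappa_1=\lambda_1$ extra cross terms appear, but smoothness is still at least $C^{\lambda_1,1}$).
\end{itemize}

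\textbf{Fourier bounds (main obstacle).}

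\emph{Upper bound.} $\tau\phi$ is compactly supported and lies in $C^{\lambda_1,1}$. Its only singularity is at $0$, of type $r^{\lambda_1+1}$ times a smooth function (plus smoother terms), and it is $C^{2t}$ elsewhere ($k\ge 2t$, and in case 2, $\kappa_2+\lambda_2+1\ge 2t$). So I cut off with a smooth bump $\chi$ near $0$.
\begin{itemize}
\item The piece $(1-\chi)\tau\phi$ is $C^{2t}$ with compact support. I need its transform to decay like $|\xi|^{-2t}$; if the smoothness is only borderline, I will demand one extra order, which the hypotheses allow via $\lambda_2\ge 2t$ and the vanishing at the sphere.
\item The piece $\chi\tau\phi$ is a homogeneous singularity of degree $\lambda_1+1$ times a smooth function. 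Its transform behaves like $c\,|\xi|^{-(\lambda_1+1+d)}=c\,|\xi|^{-2t}$ plus faster-decaying terms.
\end{itemize}

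\emph{Lower bound.} This is the delicate part. I write $\widehat{\tau\phi}(\xi)=(2\pi)^{-d}\int\widehat\tau(\eta)\widehat\phi(\xi-\eta)\,d\eta$, with $\widehat\tau\ge0$ and $\int\widehat\tau=(2\pi)^d\tau(0)>0$. Using $\widehat\phi(\xi-\eta)\ge C_1\llbracket\xi-\eta\rrbracket^{-2t}\ge C_1 2^{-t}\llbracket\xi\rrbracket^{-2t}\llbracket\eta\rrbracket^{-2t}$ (Peetre's inequality), I get
\[
\widehat{\tau\phi}(\xi)\ge c\,\llbracket\xi\rrbracket^{-2t}\int\widehat\tau(\eta)\llbracket\eta\rrbracket^{-2t}\,d\eta .
\]
The last integral is positive since $\widehat\tau\ge0$ is not identically zero. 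This gives the lower bound directly, with no need for the sign hypothesis on $\int\tau$ beyond nontriviality. For the upper bound I could alternatively use Peetre the other way, $\widehat\phi(\xi-\eta)\le C\llbracket\xi\rrbracket^{-2t}\llbracket\eta\rrbracket^{2t}$. This needs $\int\widehat\tau\llbracket\eta\rrbracket^{2t}<\infty$, which holds when $\widehat\tau$ decays faster than $|\eta|^{-2t-d}$, i.e.\ when $\tau$ has sufficient smoothness. Otherwise I fall back on the singularity analysis above.
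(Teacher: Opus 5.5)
\textbf{Gap: the Fourier upper bound.} Your smoothness arguments and positive definiteness are fine. In case 2, the factorization $(1-r)_+^{\kappa_2+\lambda_2+2}\cdot(\text{polynomial})$ is equivalent to the paper's Leibniz argument. Your lower bound is correct and improves on the paper's. The paper uses $\widehat{\tau}(0)=\int\tau>0$ to get $\widehat{\tau}>c$ on a ball $B(0,\delta)$. Your inequality $\llbracket\xi-\eta\rrbracket^{-2t}\ge 2^{-t}\llbracket\xi\rrbracket^{-2t}\llbracket\eta\rrbracket^{-2t}$ needs only $\widehat{\tau}\not\equiv 0$. The upper bound, however, is not established.

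Your Peetre alternative needs $\int\widehat{\tau}(\eta)\llbracket\eta\rrbracket^{2t}\,\dif\eta<\infty$, and the hypotheses do not give this. In case 2 with $\kappa_1=\lambda_1$, for example $\tau=\phi$ (the $\phi^2$ the paper advertises), (\ref{fourier}) for $\tau$ gives $\widehat{\tau}(\eta)\llbracket\eta\rrbracket^{2t}\ge C_1$, so the integral diverges. In case 1, $k\ge 2t$ does not provide decay of $\widehat\tau$ faster than $\llbracket\eta\rrbracket^{-2t-d}$.

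Your fallback is only asserted. The claim that $\widehat{\chi\tau\phi}$ equals $c|\xi|^{-2t}$ plus faster-decaying terms would need the distributional Fourier transform of $|x|^{\lambda_1+1}$ and a local/global splitting like the one in the proof of Proposition~\ref{symbol}. In case 1, $\tau$ is only $C^k$, not smooth, so the remainder need not decay faster than $\llbracket\xi\rrbracket^{-2t}$. Separately, your concern about ``borderline'' smoothness of $(1-\chi)\tau\phi$ is unfounded, and you cannot demand extra order in any case. If $f\in C_c^m$, then $|\xi^\alpha\widehat f(\xi)|\le\|D^\alpha f\|_{L_1}$ for $|\alpha|\le m$, so $\widehat f=O(\llbracket\xi\rrbracket^{-m})$.

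\textbf{The missing idea.} Split the required decay between both factors instead of loading it all on $\widehat\phi$ as Peetre does. First, $\widehat{\tau}\le C\llbracket\cdot\rrbracket^{-2t}$. In case 1 this follows from $\tau\in C_c^k$ with $k\ge 2t$ and the $L_1$ bound above. In case 2 it follows from (\ref{fourier}) for $\tau$ together with $\kappa_1\ge\lambda_1$. Second, $\widehat{\phi}\le C_2\llbracket\cdot\rrbracket^{-2t}$. Since $2t=\lambda_1+d+1>d$,
\[
0<\widehat{\tau\phi}(\xi)\le C\int_{\R^d}\llbracket\eta\rrbracket^{-2t}\llbracket\xi-\eta\rrbracket^{-2t}\,\dif\eta\le C'\llbracket\xi\rrbracket^{-2t}.
\]
To see the last inequality, split into the region $|\eta|\ge|\xi|/2$ and the region $|\xi-\eta|\ge|\xi|/2$. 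In the first, $\llbracket\eta\rrbracket^{-2t}\le 2^{2t}\llbracket\xi\rrbracket^{-2t}$ and $\llbracket\xi-\cdot\rrbracket^{-2t}$ is integrable. The second is symmetric. This is exactly the paper's argument, and it makes the cut-off and singularity analysis unnecessary.
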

Case 2 shows that the class of
functions satisfying Definition
\ref{gen_wend} is closed
under pointwise multiplication.
Consequently, computing an eccentrically smoothed RBF 
of this type is not harder
than computing the original 
function $\phi$. Indeed,
for a Wendland or generalized
Wendland function $\phi$, one
may take the pointwise 
power $\phi^n$; from the 
point of view of RBF interpolation
on a set $X$,
this is simply a matter of
taking $n$th powers of the matrix
entries of $\Phi_X$.
\begin{proof}
The fact that 
$
\tau \times \phi 
\in 
C^{{\lambda_1},1}(\R^d)
$ 
follows from the product rule.

The eccentric smoothness in case 1 
follows from the product rule, 
as well.

In case 2, $\tau$ and $\phi$ both 
vanish to high degree on 
$\partial B(0,1)$: 
$D^{\alpha} \bigl(\tau\phi\bigr)(x) 
= 
\sum_{\beta\le \alpha} 
C_{\beta,\alpha}
D^{\beta} \tau(x) 
D^{\alpha-\beta} \phi(x)$
by Leibniz's rule; 
if $|x|=1$, 
any such term sum is nonzero 
only if 
both $|\beta|>\kappa_2$ 
and $|\alpha-\beta|>\lambda_2$; 
thus
$D^{\alpha}\tilde{\phi}(x) =0$ 
for all $|x|=1$ and all 
$|\alpha|\le \kappa_2+\lambda_2+1$,
and the product is in 
$C^{\kappa_2+{\lambda_2}+1}$ 
in a neighborhood
of the sphere, 
and thus on $\R_*^d$.

The Fourier transform of 
$\tilde{\phi}$ is given by the 
convolution
$
\widehat{\tau \phi}(\xi) 
= [\hat{\tau}*\hat{\phi}](\xi)
$.
This  guarantees strict 
positivity of the Fourier transform 
and therefore  positive definiteness 
of $\tilde{\phi}$.
Furthermore,
Peetre's inequality guarantees the 
convolution estimate
$$
\int 
\llbracket\zeta\rrbracket^{-N_1}
\llbracket\xi-\zeta\rrbracket^{-N_2} \dif \zeta 
\le  
C 
\llbracket \xi
\rrbracket^{-\min(N_1,N_2)}$$
(see also
 \cite[Appendix B.1]{Grafakos_M}), 
 which we can use to get the desired 
 bounds on the Fourier transform of 
 $\tilde{\phi}$.

In each case, 
$\widehat{\tau}(\xi) 
\le 
C \llbracket \xi\rrbracket^{-2t}
$.
(In case 1, 
$\tau \in C_c^k(\R^d)$
guarantees  
$\widehat{\tau}(\xi) 
\le 
C \llbracket \xi\rrbracket^{-k}$
for some $C$,
while in case 2 
it follows from Definition \ref{gen_wend}
and 
the fact that 
$\kappa_1\ge \lambda_1$.)
An application of the 
above convolution inequality 
with $N_2= 2t = N_1$,
ensures the upper bound. 

For the lower bound, 
positivity and continuity of
$\hat{\tau}$ 
ensure the $\hat{\tau}(z)>c>0$ 
on a neighborhood
$B(0,\delta)$; 
thus we have 
$$ 
[\hat{\tau}*\hat{\phi}](\xi) 
= 
\int \hat{\phi}(\zeta) 
\hat{\tau}(\xi-\zeta)\dif \zeta
\ge 
\int_{|\xi-\zeta|<\delta}  
\hat{\phi}(\zeta) 
\hat{\tau}(\xi-\zeta)
\dif \zeta
\ge 
c 
\int_{|\xi-\zeta|<\delta} 
\hat{\phi}(\zeta)\dif \zeta
$$
Using the lower bound 
$\hat{\phi}(\zeta)
\ge 
C_1\llbracket \zeta \rrbracket^{-2t}
$
and noting there is a 
$\delta$-dependent constant $C'$ 
so that for 
$\zeta\in B(\xi,\delta)$, 
$\llbracket \zeta \rrbracket 
\le C' \llbracket \xi \rrbracket$, 
we have
$$ 
[\hat{\tau}*\hat{\phi}](\xi) 
\ge 
c 
\int_{|\xi-\zeta|<\delta} 
\hat{\phi}(\zeta)
\dif \zeta
\ge 
c C_1 (C')^{-2t} 
\mathrm{vol}(B(\xi,\delta)) 
\llbracket \xi \rrbracket^{-2t} .
$$
and the result follows with 
$\tilde{C}_1 =
c (C')^{-2t} 
\mathrm{vol}(B(\xi,\delta))>0.
$
\end{proof}
%
%
%
\section{Compressibility by samplets}
Let $\varrho$ be a positive integer.
A kernel $\kappa:\R^d\times \R^d\to \R$ is ``$\varrho$\,-\,asymptotically smooth'' in the sense of \cite[Section 3.3]{HMSS}
if for all multi-integers $\alpha, \beta$ for which $|\alpha|,|\beta|\le \varrho$, there exists a constant
$c_{\alpha,\beta}$ so that
\begin{equation}\label{eq:asympsmooth}
|D_x^{\alpha}D_y^{\beta} \kappa(x,y) | \le c_{\alpha,\beta} |x-y|^{-(|\alpha|+|\beta|)}.
\end{equation}
Kernels which are $\varrho$\,-\,asymptotically smooth have collocation matrices which can be nicely compressed
by using {\em samplets}  
of order $\varrho$, which we discuss below.

A samplet basis of order $\varrho$,
$$\Sigma  = \Phi_0 \cup \bigcup_{j=0}^J \Sigma_j\subset \mathrm{span}_{\xi\in\Xi} \delta_{\xi},$$ 
is a collection of finitely supported measures constructed by a multiresolution analysis (MRA)
 which is a basis for $\mathrm{span}\{ \delta_{\xi}\mid \xi\in\Xi\}$, and for which
 every samplet $\sigma_{j,k} \in \Sigma_j$  annihilates $\mathcal{P}_{\varrho-1}(\R^N)$
  (i.e., has $\varrho-1$ vanishing moments).
 Indeed, $\Sigma$ is an orthonormal basis in the induced $\ell_2(\Xi)$ inner product given
 by $\|\sum_{\xi\in \Xi} a_{\xi} \delta_{\xi}\|^2 = \sum_{\xi\in\Xi} |a_{\xi}|^2$.
 
 Associated to each samplet $\sigma_{j,k} $ is a {\em cluster} $\nu_{j,k} \subset \Xi$ 
 with $\supp(\sigma_{j,k})\subset \nu_{j,k}$,
 The clusters (and therefore the supports) have the property that for every $j,k$ and $j',k'$ with $j'\ge j$,
 $\nu_{j',k'}\subset \nu_{j,k}$ or $\mathrm{conv}(\nu_{j,k}) \cap \mathrm{conv}(\nu_{j',k'}) =\emptyset$. 
 Indeed, the clusters are the nodes of a binary tree structure with $\nu_0 = \Xi$ at the root, and were every 
 where every non-leaf node $\nu_{j,k}$ has precisely two children $\nu_{j+1,k_1}$ and $\nu_{j+1,k_2}$
 which satisfy $\nu_{j,k} =\nu_{j+1,k_1} \cup \nu_{j+1,k_2}$.
 See \cite{HM} for the construction  of the cluster three, the  MRA, as well as presentation of desirable properties
 of the samplets.
 
 The standard collocation matrix $K_{\Xi} =\bigl(\kappa(\xi,\zeta)\bigr)_{\xi,\zeta\in\Xi} = (\langle \kappa, \delta_{\xi}\otimes \delta_{\zeta}\rangle)_{\xi,\zeta\in\Xi}$ can be easily transformed via a change of basis to 
 $
 K_{\Xi}^{\Sigma}
 =
\langle 
 \kappa,\sigma\otimes\sigma'
\rangle_{\sigma,\sigma' \in \Sigma}
$ (this change of basis is unitary, by the orthonormality of the samplets).
For a  $\varrho$-asymptotically smooth 
kernel, the collocation matrix  in 
samplet coordinates
$K_{\Xi}^{\Sigma}$ 
can be nicely compressed by
\cite[Theorem 3]{HMSS}, 
 to produce a sparse matrix.
 For a user defined $\eta>0$, 
 the  matrix $K_{\eta}$ 
 is obtained by 
 truncating
$K_{\Xi}^{\Sigma}$ 
 by setting
 $
 \bigl({K}_{\eta}\bigr)_{\sigma,\sigma'}
 =
 0
 $
 if  the clusters $\nu$ and $ \nu'$ of $\sigma$ and $\sigma'$ (respectively)
 are distant in the sense that 
 $\mathrm{dist}(\nu,\nu')
 \ge 
 \eta \max(\mathrm{diam}(\nu),\mathrm{diam}(\nu'))$.  
 For this setup, there is a constant $c$ so that
$$\|K_{\Xi}^{\Sigma}-K_{\eta}\|_{F} \le c_{\text{samp}}\eta^{-2\varrho} N \log(N), \qquad \text{where }N =\#\Xi.$$
This is precisely \cite[Theorem 3]{HMSS};
furthermore, \cite[Corollary 1]{HMSS} shows that for quasi-uniform sets $\Xi$,
a relative error of $\frac{\|K_{\Xi}^{\Sigma}-K_{\eta}\|_{F}} {\|K_{\Xi}^{\Sigma}\|_{F}}\le c\eta^{-2\varrho}$
with compressed matrix $K_{\eta}$ having $\mathcal{O}(N \log N)$ nonzero entries. 

Naturally,
for such estimates to be useful, it makes sense to assume $\eta>1$,
since the case $\eta\le 1$ in the estimates above can already be accomplished 
by setting $K_{\eta}=0$.
(Note, for instance, that the uniform bound on the diagonal entries $\sup_{\xi\in \R^d} \kappa(\xi,\xi) \le C$
guarantees that  the entries of $K_{\Xi}^{\Sigma}$ are bounded, and so $\|K_{\Xi}^{\Sigma}\|_{F}
= \|K_{\Xi}\|_F \le C N$ ).

For translation invariant kernels like  compactly supported RBFs, where $\kappa(x,y) = \phi(x-y)$,
the above condition reduces to the following: for all $|\alpha|\le 2\varrho$, there is $c_{\alpha}$
so that for all $x\neq 0$,
$$|D_x^{\alpha} \phi(x) | \le c_{\alpha} |x|^{-|\alpha|}.$$
Naturally this means 
$\phi |_{\R_*^d} 
\in C^{2\varrho}(\R_*^d)$ -- 
in short, the asymptotic smoothness
of  a compactly supported RBF is limited by its smoothness at the frontier $\partial B(0,1)$.
The originally considered RBFs 
satisfying Definition \ref{gen_wend}
are at most asymptotically smooth of 
order $\lambda_2/2$.

On
the other hand, as we show below, eccentric smoothing a compactly supported kernel provides greater
asymptotic smoothness.
It follows that compressibility improves for eccentrically smoothed  RBFs. 

Since $\lambda_2>\lambda_1+1$, one could
simply take a higher order compactly supported RBF 
(one with greater global smoothness,  a more rapidly decaying Fourier transform and a smaller native space), 
but this comes at a cost to the condition number  of the matrix $\Phi_{\Xi}$ (as well as its representation
in the samplet basis $\Phi_{\Xi}^{\Sigma}$).
 In particular, \cite[Theorem 12.3]{Wendland_book}  ensures that the smallest eigenvalue of $\Phi_{\Xi}$
behaves like 
$\lambda_{\min} (\Phi_{\Xi}) 
\ge c_{\text{eig}} q_{\Xi}^{2t-d} = c_{\text{eig}} q_{\Xi}^{\lambda_1+1}$, where $t = (\lambda_1+1+d)/2$ and 
$q_{\Xi}=\min_{\xi\in\Xi}\min_{ \zeta\in \Xi\setminus\{\xi\}} |\xi-\zeta|$.
This leads to a theoretically bounded condition number 
on the order $ q_{\Xi}^{-2t}$ for point sets exhibiting the bound $N=\#\Xi \sim q_{\Xi}^{-d}$ 
(this is a consequence of {\em quasi-uniformity}).

Thus one virtue 
of eccentric smoothing of the compactly supported kernel 
is that it improves the compressibility of the
collocation matrix  while preserving  its conditioning.

\paragraph{Asymptotic smoothness of finite order}
Any compactly supported
kernel of the form (\ref{Wend})
with $\phi |_{\R_*^N} \in C^{2\varrho}(\R_*^N)$ is $\varrho$-asymptotically smooth. 
In particular, we have the following.
\begin{cor}
If $\tilde{\phi}$ is the eccentrically smoothed  RBF considered in case 1 of Proposition \ref{fattening}
then $\tilde{\phi}$ is 
$k/2$
asymptotically smooth.
\end{cor}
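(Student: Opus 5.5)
The plan is to verify the reduced translation-invariant criterion stated just before the corollary: for all $|\alpha|\le 2\varrho$ with $\varrho=k/2$, that is for all $|\alpha|\le k$, there is $c_\alpha$ with $|D^\alpha\tilde\phi(x)|\le c_\alpha|x|^{-|\alpha|}$ for every $x\neq0$. By case 1 of Proposition \ref{fattening}, $\tilde\phi=\tau\phi$ is in $C^k(\R^d_*)$. It is supported in $\overline{B(0,1)}$, since both factors are. So the only issue is the behaviour of derivatives near the origin and near the sphere $\partial B(0,1)$. I split $\R^d_*$ into three regions: $0<|x|\le 1/2$, $1/2\le|x|\le 1$, and $|x|>1$.

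On $|x|>1$, $\tilde\phi$ vanishes identically, so every bound holds trivially. On the compact annulus $1/2\le|x|\le1$, $\tilde\phi$ is $C^k$ on a neighborhood, since case 1 gives $C^k$ across the sphere. Hence $D^\alpha\tilde\phi$ is bounded there by some $M_\alpha$ for $|\alpha|\le k$. Because $|x|^{-|\alpha|}\ge1$ on this region, taking $c_\alpha\ge M_\alpha$ suffices.

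The main step is the punctured ball $0<|x|\le1/2$. There $\phi(x)=p(|x|)=\sum_\ell c_\ell |x|^{\lambda_1+1+2\ell}+q(|x|^2)$. The even part $q(|x|^2)$ is a polynomial in $x$, so its derivatives are bounded. Each odd term is homogeneous of degree $m=\lambda_1+1+2\ell\ge1$ and smooth away from $0$, so $D^\alpha |x|^m$ is homogeneous of degree $m-|\alpha|$. This gives $|D^\alpha|x|^m|\le C|x|^{m-|\alpha|}\le C'|x|^{-|\alpha|}$ for $|x|\le1/2$. Thus $|D^\alpha\phi(x)|\le C_\alpha|x|^{-|\alpha|}$ on the punctured ball for every $\alpha$.

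Next I apply Leibniz's rule: $D^\alpha(\tau\phi)=\sum_{\beta\le\alpha}\binom{\alpha}{\beta}D^{\alpha-\beta}\tau\,D^\beta\phi$. Since $\tau\in C^k(\R^d)$ is compactly supported, $D^{\alpha-\beta}\tau$ is bounded for $|\alpha|\le k$. Combining, each term is $O(|x|^{-|\beta|})$, and $|x|^{-|\beta|}\le |x|^{-|\alpha|}$ when $|x|\le1/2<1$. This yields the desired bound in the punctured ball.

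The only point needing care is the homogeneity estimate for $|x|^m$ near $0$ (immediate from scaling) together with the observation that the sphere contributes no singularity up to order $k$. Both points are supplied by the structure (\ref{Wend}) and by case 1 of Proposition \ref{fattening}. Finally, when $k$ is odd, $k/2$ is read as allowing $|\alpha|+|\beta|\le k$ in (\ref{eq:asympsmooth}). The argument above covers all orders up to $k$, so the conclusion holds in that reading as well.
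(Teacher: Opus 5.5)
Your proposal is correct and follows the paper's proof essentially step for step. Both arguments use the same three ingredients. First, the bound for $|x|\ge 1/2$ comes from $\tilde\phi|_{\R^d_*}\in C^k$ together with the support in the unit ball. Second, near the origin the homogeneity estimate $|D^\alpha |x|^m|\le C|x|^{m-|\alpha|}$ handles the odd part of $p$. Third, Leibniz's rule with the bounded derivatives of $\tau$ finishes the estimate. Your restriction of the homogeneity step to $0<|x|\le 1/2$, rather than the paper's $|x|\le 1$, is a harmless and slightly cleaner variation, since it avoids any question about derivatives of $\phi$ on the sphere.
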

\begin{proof}
The fact that 
 $\tilde{\phi}|_{\R_*^d}
 \in C^{k}$,
 ensures that there is $C_{\phi}$ so 
 that  for any 
 $|\alpha|\le k$ and  
 for any $|x|\ge 1/2$, 
 $$|D^{\alpha}\tilde{\phi}(x)|
 \le  C_{\phi} |x|^{-|\alpha|}$$
 (this uses the fact that $\tilde{\phi}$ is supported in the unit ball).
 
To obtain bounds near to the origin, note that homogeneity implies  that,
for any $b\in \R$ and $\alpha\in \mathbb{Z}_+^d$, the
estimate 
$\bigl|D^{\alpha} |x|^{b}\bigr| 
\le 
\gamma_{b,\alpha} |x|^{b-|\alpha|}
$ holds
for some constant $\gamma_{b,\alpha} $ and for all $x\neq 0$. 
Consequently, for any multi-index $\alpha$ there is $C_{\alpha}$ so that if $|x|\le 1$ then
 $$
 |D^{\alpha}\phi(x)|
 \le 
 \sum_{\ell=0}^M 
 |c_{\ell}| 
 \gamma_{\lambda_1+1+2\ell,\alpha} 
 |x|^{\lambda_1+1+2\ell-|\alpha|} 
 +\bigl|D^{\alpha} q(|x|^2)\bigr| 
 \le C_{\alpha} |x|^{-|\alpha|}.
 $$
 Multiplying $\phi$ by $\tau$, which is $C^k$ gives, by Leibniz's rule, that
 $$|D^{\alpha}\tilde{\phi}(x)|\le \sum_{\beta\le \alpha} \begin{pmatrix} \alpha\\ \beta\end{pmatrix} \|D^{\beta} \tau\|_{\infty} C_{\alpha-\beta} |x|^{-|\alpha-\beta|}
 \le C|x|^{-|\alpha|}$$
 provided $|\alpha|\le k$.
\end{proof}

We can do better than this if we choose $\tau$ to also be a generalized  Wendland function, as given
in Definition \ref{gen_wend}; in particular, by requiring $\tau$ to be polyhomogeneous on its support. 
 
 \begin{cor}
 \label{Wend_smoothing}
 If $\tilde{\phi} = \tau\phi$ 
 is a eccentrically smoothed  RBF
considered 
in case 2 of
Proposition \ref{fattening},
 then  $\tilde{\phi}$ is $(\kappa+\lambda_2+1)/2$ asymptotically smooth.
\end{cor}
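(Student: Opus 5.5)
We follow the proof of the preceding corollary and split $\R^d_*$ into a region near the origin, $0<|x|\le 1/2$, and a region away from it, $|x|\ge 1/2$. Throughout, $\kappa$ in the statement is read as the eccentric smoothness $\kappa_2$ of $\tau$. We set $m:=\kappa_2+\lambda_2+1$. By the remark just before the previous corollary, $\varrho$-asymptotic smoothness for a translation invariant kernel amounts to
$$|D^{\alpha}\tilde{\phi}(x)|\le c_{\alpha}|x|^{-|\alpha|} \qquad \text{for all } x\neq 0 \text{ and } |\alpha|\le 2\varrho .$$
So with $\varrho=m/2$ it suffices to prove this estimate for all $|\alpha|\le m$.

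\emph{Away from the origin.} Case 2 of Proposition \ref{fattening} gives $\tilde{\phi}|_{\R^d_*}\in C^{m}(\R^d_*)$. The function $\tilde{\phi}$ vanishes for $|x|\ge 1$, and the closed annulus $1/2\le|x|\le 1$ is compact. Hence every $D^{\alpha}\tilde{\phi}$ with $|\alpha|\le m$ is bounded on $|x|\ge 1/2$. On that set $|x|^{-|\alpha|}\ge 1$, so the estimate holds there with $c_\alpha=\sup_{|x|\ge1/2}|D^\alpha\tilde\phi(x)|$.

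\emph{Near the origin.} For $|x|<1$ both factors are polyhomogeneous:
$$\phi(x)=\sum_{\ell} c_{\ell}|x|^{\lambda_1+1+2\ell}+q(|x|^2), \qquad \tau(x)=\sum_{j} d_j|x|^{\kappa_1+1+2j}+\tilde q(|x|^2).$$
Their product on $B(0,1)$ is therefore a finite sum of terms of two kinds. The first kind are odd powers $|x|^{b}$ with $b\ge \lambda_1+1>0$; these come from an odd power times a polynomial in $|x|^2$. The second kind are polynomials in $|x|^2$, which include the products of two odd powers, since these are even powers of $|x|$.

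We use the homogeneity bound from the previous proof, $\bigl|D^{\alpha}|x|^{b}\bigr|\le\gamma_{b,\alpha}|x|^{b-|\alpha|}$. For $|x|\le 1/2$ and $b>0$ this gives $|D^{\alpha}|x|^b|\le \gamma_{b,\alpha}|x|^{-|\alpha|}$. Derivatives of the polynomial terms are bounded on the ball, hence also $\le C|x|^{-|\alpha|}$ there. This gives the estimate for every $\alpha$ on $0<|x|\le1/2$, with no restriction to $|\alpha|\le k$ as in case 1.

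Combining the two regions yields the estimate for all $|\alpha|\le m$, which is $(\kappa_2+\lambda_2+1)/2$-asymptotic smoothness.

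The only point needing care is the matching at $|x|=1$. The polynomial form is valid only inside the ball, and the order $m$ is capped entirely by the regularity across the sphere. That regularity is exactly what the Leibniz-rule argument in case 2 of Proposition \ref{fattening} supplies: it shows that derivatives of $\tilde{\phi}$ up to order $m$ vanish on the sphere, so $\tilde{\phi}$ is $C^m$ across it. We use this fact as given rather than reproving it.
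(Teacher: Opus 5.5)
Your proposal is correct and follows essentially the same route as the paper: the polyhomogeneous form of $\tau\phi$ inside $B(0,1)$ gives $|D^{\alpha}\tilde{\phi}(x)|\le C|x|^{-|\alpha|}$ near the origin for every $\alpha$, and the $C^{\kappa_2+\lambda_2+1}$ regularity across the sphere from Proposition~\ref{fattening}, together with the support in $B(0,1)$, handles $|x|\ge 1/2$. One small slip: $|x|^{-|\alpha|}\ge 1$ holds only for $1/2\le |x|\le 1$, but for $|x|>1$ the bound is trivial because $D^{\alpha}\tilde{\phi}$ vanishes there.
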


\begin{proof} 
Since the product of polyhomogeneous 
functions is polyhomogeneous,
we have for $|x|\le 1$,
$$\tilde{\phi}(x) = 
\sum_{\ell =0}^{\tilde{M}} 
a_{\ell} |x|^{2a+1+2\ell} 
+ \tilde{q}(|x|^2).
$$
Consequently, 
for all $0<|x|<1$, 
$|D^{\alpha} \tilde{\phi} (x) |
\le C |x|^{-|\alpha|}$ 
for some constant $C$
depending on $\alpha$, 
and  the coefficients $a_{\ell}$ and $\tilde{q}$ 
(in particular, 
this holds for all $|\alpha|\ge 0$).
On the other hand, 
$\tilde{\phi} \in C^{\kappa_2+\lambda_2+1}(\R_*^d)$ 
and vanishes outside of $B(0,1)$, 
so the estimate 
$
|D^{\alpha} \tilde{\phi} (x) |
\le C |x|^{-|\alpha|}
$ 
extends to $\R_*^d$ as long as 
$|\alpha|\le \kappa_2+\lambda_2+1$.
\end{proof}

One application of this is the following. Suppose, one is interested in solving a linear system with the compressed matrix $K_{\Xi}^{\Sigma} x\approx K_{\eta} \tilde{x} =f$ in order to obtain an approximate solution. In order to solve the system with the compressed matrix, one needs that $K_{\eta}$ is regular (and by construction symmetric), i.e., that its lowest eigenvalue is bounded below. 
This follows by classical perturbation of eigenvalue results from
\begin{equation*}
    \|K_{\Xi}^{\Sigma}-K_{\eta}\|_{2} \le 
    c_{\text{samp}}\eta^{-2\varrho} q_{\Xi}^{-d} d | \log(q_{\Xi})| 
    \le 
    c_{\text{samp}}\eta^{-2\varrho} q_{\Xi}^{-d} d q^{-1}_{\Xi}
    \le 
    \frac{1}{2}c_{\text{eig}} q_{\Xi}^{\lambda_1+1} 
    {\le}
    \frac{1}{2}\lambda_{\min} (\Phi_{\Xi}). 
\end{equation*}
Now, using Corollary 
\ref{Wend_smoothing} with
$2\varrho=\kappa+\lambda_2+1$, 
we obtain that $K_{\eta}$ is positive definite for 
$ \eta\gtrsim 
q_{\Xi}^{-\frac{\lambda_1+d+2}{\kappa+\lambda_2+1}}$.
As $\eta$ determines the sparsity of $K_{\eta}$, the inequality ensuring positive definiteness might also be used to determine the eccentric smoothing parameter $\kappa$ if $\eta$ is considered fixed by enforcing enough compression to make matrix operations feasible.

\section{Compactly supported RBFs with infinite eccentric smoothness}

It follows that there exists for any $\lambda_2>2t$ 
a compactly supported RBF $\phi$
with native space $W_2^t(\R^d)$ and boundary smoothness 
$\phi|_{\R_*^d}\in C^{\lambda_2}(\R_*^d)$.
Notably, by considering the standard mollifier 
$\varphi\in C_c^{\infty}(\R^d)$ 
with support in $B(0,1/2)$
the iterated function 
$$
\tau 
= 
\varphi*\varphi
$$
 has Fourier transform
$\widehat{\tau}(\xi) = 
|\widehat{\varphi}(\xi)|^2
\ge 0
$ which is analytic and thus has
only isolated zeros on $\R^d$.
Consequently, $\tau$ is an infinitely smooth RBF with support in $B(0,1)$.
By using this function in Proposition \ref{fattening}, we can obtain compactly supported
RBFs with  Sobolev native space $W_2^t(\R^d)$ and singleton singular support:
$\mathrm{sing \,supp}(\phi )= \{0\}$.
See for instance \cite{platte:2015} for 
an exposition on how to compute these 
kernels numerically as no closed-form 
expression seems to be available. See 
also the recent paper 
\cite{Schaback:2026} 
for a Mercer type approach towards 
smooth, compactly supported kernels, 
which are not radial.
\begin{proposition} 
\label{symbol}
Let  $\phi$ be a compactly supported RBF 
of the form (\ref{Wend}), and let
$\tilde{\phi}= \phi*\tau$ be a compactly 
supported RBF
obtained from Proposition \ref{fattening} 
with $\tau \in C^{\infty}(\R^d)$.
Then the integral operator 
$\tilde{\phi}\!*$ is pseudodifferential 
with 
symbol in $S_{1,0}^{-2t}(\R^d)$. 
\end{proposition}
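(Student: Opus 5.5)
The symbol of $\tilde\phi*$ is $a(x,\xi)=\widehat{\tilde\phi}(\xi)$, which is independent of $x$. All $x$-derivatives therefore vanish, and it suffices to show that for every multi-index $\alpha$,
$$
|D_\xi^\alpha \widehat{\tilde\phi}(\xi)| \le C_\alpha \llbracket\xi\rrbracket^{-2t-|\alpha|}.
$$
(Note that $\tilde\phi=\tau\phi$ as in Proposition \ref{fattening}. With $\tau\in C^\infty$, case 1 gives $\tilde\phi|_{\R^d_*}\in C^\infty(\R^d_*)$, so $\mathrm{sing\,supp}\,\tilde\phi=\{0\}$.) Smoothness of $\widehat{\tilde\phi}$ is automatic, since $\tilde\phi$ is compactly supported. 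The case $\alpha=0$ is the upper bound already proved in Proposition \ref{fattening}.

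\textbf{Step 1: split off the singularity.} Choose a cutoff $\chi\in C_c^\infty(B(0,1/2))$ with $\chi\equiv 1$ near $0$, and write
$$
\tilde\phi=\chi\tilde\phi+(1-\chi)\tilde\phi .
$$
The second piece lies in $C_c^\infty(\R^d)$, so its Fourier transform is Schwartz and satisfies every symbol estimate. Near the origin $\tau$ is smooth and $\phi$ is given by (\ref{Wend}), so
$$
\chi\tilde\phi=\sum_{\ell=0}^M c_\ell\,\chi\tau\,|x|^{\lambda_1+1+2\ell}+\chi\tau\, q(|x|^2).
$$
The last term is again $C_c^\infty$. This reduces the claim to the terms $\psi\,|x|^{b}$ with $\psi:=\chi\tau\in C_c^\infty$ and $b=\lambda_1+1+2\ell$ odd, hence not an even integer.

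\textbf{Step 2: estimate each singular term.} By classical theory (e.g.\ \cite[Ch.\ 7]{Taylor2}), the distributional Fourier transform of $|x|^b$ equals $c_b|\xi|^{-b-d}$ away from the origin. Hence $\widehat{\psi|x|^b}=(2\pi)^{-d}\,\hat\psi*\widehat{|x|^b}$. Multiplication by $x^\alpha$ corresponds to $D_\xi^\alpha$, so I would argue on the space side:
$$
x^\alpha\psi(x)|x|^b=\sum_{|\gamma|=|\alpha|}\psi_\gamma(x)\,|x|^{b+|\alpha|}\,\omega_\gamma(x/|x|),
$$
that is, a smooth cutoff times a function homogeneous of degree $b+|\alpha|$ that is smooth on the sphere. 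The key lemma is then:
\begin{quote}
If $h$ is homogeneous of degree $s>-d$ on $\R^d$, smooth away from $0$, and $\psi\in C_c^\infty$, then $\widehat{\psi h}$ is smooth and $|\widehat{\psi h}(\xi)|\le C\llbracket\xi\rrbracket^{-s-d}$.
\end{quote}
Applied with $s=b+|\alpha|\ge 2t-d+|\alpha|$, it gives exactly $|D^\alpha_\xi\widehat{\tilde\phi}(\xi)|\lesssim\llbracket\xi\rrbracket^{-2t-|\alpha|}$.

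\textbf{Step 3: prove the lemma.} For the lemma I would use a Littlewood--Paley (dyadic) decomposition on the space side: $\psi h=\sum_{j\ge0}\psi h\,\theta(2^j x)$ plus a smooth remainder. Each piece is $2^{-js}$ times a dilate of a fixed-shape bump. Its Fourier transform is therefore bounded by
$$
2^{-js-jd}\,\bigl(1+2^{-j}|\xi|\bigr)^{-N}
$$
for every $N$, uniformly in $j$, because the derivatives of $\psi(2^{-j}\cdot)$ stay bounded. Summing over $j$ gives $|\xi|^{-s-d}$ for large $|\xi|$, with the dominant contribution coming from $2^j\sim|\xi|$. This dyadic summation is the main technical obstacle: one has to track the uniform Schwartz bounds and make sure the geometric sums converge, which uses $s+d>0$ on one side and $N>s+d$ on the other. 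An alternative route is to show directly that $\hat\psi*(c_b|\xi|^{-b-d})$ has the right derivative decay, splitting the convolution integral into the regions $|\zeta|<|\xi|/2$ and $|\zeta|\ge|\xi|/2$ in the spirit of the Peetre estimate used in Proposition \ref{fattening}. Once the lemma is in place, summing the finitely many terms from Step 1 yields $a\in S^{-2t}_{1,0}$.
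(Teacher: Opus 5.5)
Your proof is correct, but it works on the opposite side of the Fourier transform from the paper's.

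\textbf{The paper's route.} It keeps $\tilde\phi=\Phi\tau$, with $\Phi$ the global polyhomogeneous extension of $p(|x|)$, and writes $D^\alpha\widehat{\tilde\phi}=\widehat\tau*D^\alpha\widehat\Phi$. It takes the large-$|\xi|$ derivative bounds (\ref{polyharmonic}) for $\widehat\Phi$ as known, i.e.\ the behaviour of the Fourier transforms of $|x|^b$ and of $q(|x|^2)$. It then splits the convolution with a cutoff around the singular point $\zeta=\xi$. The near part is bounded by Schwartz seminorms, using that $D^\alpha\widehat\Phi$ is tempered and $\widehat\tau$ decays rapidly near $\xi$. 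The far part is bounded by Peetre's inequality.

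\textbf{Your route.} You localize in $x$ instead. The cutoff $\chi$ reduces everything to terms $\psi|x|^b$ with $\psi\in C_c^\infty$. Trading $D_\xi^\alpha$ for multiplication by $x^\alpha$ turns all the symbol estimates into a single decay estimate. (Your sum over $\gamma$ is unnecessary, since $x^\alpha|x|^b$ is already homogeneous of degree $b+|\alpha|$ and smooth on $\R^d_*$.) Your dyadic proof of the lemma is sound. Each piece is $2^{-js}g_j(2^j\cdot)$ with $g_j$ supported in a fixed annulus and bounded in $C^\infty$ uniformly in $j$. The two geometric sums converge because $s+d>0$ and $N>s+d$.

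\textbf{What each buys.} Your argument is more self-contained. It never needs the explicit Fourier transforms of homogeneous distributions, the estimate (\ref{polyharmonic}) that the paper asserts without proof, or seminorm bounds on distributional pairings. It also makes plain that only the germ of $\tilde\phi$ at the origin matters. For the same reason, your parity remark on $b$ is irrelevant: an even $b$ would simply give another $C_c^\infty$ term. The paper's argument instead mirrors the convolution structure $\widehat{\tau\phi}=\widehat\tau*\widehat\phi$ already used for the Fourier bounds in Proposition \ref{fattening}.
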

\begin{proof}
Fix $\alpha$
It suffices to show that 
$|D^{\alpha}\mathcal{F}\tilde{\phi}(\xi) |\le C\llbracket \xi\rrbracket^{-(2t+|\alpha|)}$
for all $\xi$. Since $\mathcal{F}\tilde{\phi}(\xi)$ is continuous, it suffices to demonstrate
 this inequality for $|\xi|$ sufficiently large (we will describe this in the next paragraph).

This follows because  
$\Phi:\R^d\to \R$, defined by
$\Phi(x) =\sum_{\ell=0}^M c_\ell |x|^{\lambda_1+1+2\ell} +q(|x|^2)$
for all $x$ (not just for $|x|\le 1$),
as a combination of homogeneous functions,
satisfies the following: there exist constants  $\Gamma_{\alpha}$ and $R_{\alpha}>0$  
so that  for any  $|\xi|\ge R_\alpha$,
the inequality
\begin{equation}
\label{polyharmonic}
|D_{\xi}^{\alpha} \widehat{\Phi}(\xi)|
\le \Gamma_{\alpha} \llbracket \xi\rrbracket^{-2t-|\alpha|}
\end{equation}
holds.
Set $|\xi|>4R_\alpha$.

Since $\tau$ is supported on $B(0,1)$,
the  functions $ \phi\tau $ and
$  \Phi \tau$ are identical.
Thus $\tilde{\phi}=\Phi \tau$ has distributional Fourier transform
$\mathcal{F} \tilde{\phi}= [\widehat{\Phi}* \widehat{\tau}]$ which satisfies for $\xi\neq 0$
$$
D^{\alpha}\mathcal{F}\tilde{\phi}(\xi)  = [\widehat{\tau}* D^{\alpha} \widehat{\Phi}](\xi)
=
\langle D^{\alpha} \widehat{\Phi}(\xi - \cdot),\widehat{ \tau}\rangle 
$$
Using a smooth cut-off $\upsilon$, 
supported in $B(0,2R_{\alpha})$ and equaling
one on $B(0,R_{\alpha})$,
we express the above pairing as
$$
D^{\alpha}\mathcal{F}\tilde{\phi}(\xi) = 
\langle D^{\alpha} \widehat{\Phi}(\xi - \cdot),G\rangle 
+
\langle D^{\alpha} \widehat{\Phi}(\xi - \cdot),B\rangle$$
where 
$B=\widehat{ \tau}\upsilon(\xi-\cdot)$ and 
$G = \widehat{ \tau}\bigl(1-\upsilon(\xi-\cdot)\bigr)$
are Schwartz functions, 
having supports
$\mathrm{supp}(B)\subset B(\xi,2R_\alpha)$ and $\mathrm{supp}(G)\subset \R^d\setminus B(\xi,R_\alpha)$.

To treat $|\langle D^{\alpha} \widehat{\Phi}(\xi - \cdot),B\rangle|$, we note that 
because $ D^{\alpha} \widehat{\Phi}(\xi - \cdot)$ is a tempered distribution,
$$|\langle D^{\alpha} \widehat{\Phi}(\xi - \cdot),B\rangle|
\le 
C_{\Phi}\sum_{|\alpha|\le N,|\beta|\le M} \rho_{\alpha,\beta}(B)$$ 
for some constant $C_{\Phi}$
and for semi-norms $\rho_{\gamma,\beta} (B)= \sup_{\xi \in \R^d}|x^{\gamma}D^{\beta}B(\xi)|$.
Since $\widehat{\tau}\in \mathcal{S}(\R^d)$,
the fact that 
  $$\sup_{\zeta \in B(\xi,2R_{\alpha})}  |\zeta^{\gamma} D^{\beta}\widehat{\tau}(\zeta)| \le C_{\gamma,\beta} \min_{\zeta\in B(\xi,2R_{\alpha})}|\zeta|^{-M}$$ 
  for each $\gamma,\beta$ implies that for any $M>0$ there is $C_M$ so that 
$$\sum_{|\alpha|\le N,|\beta|\le M} \rho_{\alpha,\beta}(B) 
\le C_M (1 +|\xi| -2R_{\alpha})^{-M}.$$
Thus
$$|\langle D^{\alpha} \widehat{\Phi}(\xi - \cdot),B\rangle|
\le C_{\Phi} \sum_{|\alpha|\le N,|\beta|\le M} \rho_{\alpha,\beta}(B)
\le  
C_{\Phi} C_{M} (1+|\xi|-2R_\alpha)^{-M}\le C \llbracket \xi\rrbracket^{-M} .$$
for any $M$ -- in particular, for $M>2t+|\alpha|$.

To treat $\langle D^{\alpha} \widehat{\Phi}(\xi - \cdot),G\rangle $, we note that the pairing is represented by an integral, since $\widehat{\Phi}$ is regular
 on $\R_*^d$. Thus, for $M>2t+|\alpha|+d$, we have
\begin{eqnarray*}
|\langle D^{\alpha} \widehat{\Phi}(\xi - \cdot),G\rangle |
&=&
\left|\int_{|\xi-\zeta|>R_\alpha} 
  D^{\alpha} \widehat{\Phi}(\xi - \zeta) 
  \widehat{\tau}(\zeta) 
  \bigl(1-\upsilon(\xi-\zeta)\bigr)
\dif\zeta\right|\\
&\le&
\Gamma_{\alpha}C_M
\int_{|\xi-\zeta|>R_{\alpha}} 
 \llbracket \xi -\zeta\rrbracket^{-2t-|\alpha|}
  \llbracket\zeta\rrbracket^{-M} 
\dif\zeta.
\end{eqnarray*}
 Here we have used (\ref{polyharmonic}) and the fact that $\tau$ is a Schwartz function. 
 To finish the estimate,
 we use Peetre's inequality 
 $ \llbracket\xi -\zeta\rrbracket^{-2t-|\alpha|} \le 2^{2t+|\alpha|}  \llbracket \xi \rrbracket^{-2t-|\alpha|} 
 \llbracket \zeta \rrbracket^{2t+|\alpha|}$ 
 to get
$$ |\langle D^{\alpha} \widehat{\Phi}(\xi - \cdot),G\rangle |
\le C \llbracket \xi \rrbracket^{-2t-|\alpha|} .$$
Adding this to the bound for 
$|\langle D^{\alpha} \widehat{\Phi}(\xi - \cdot),B\rangle|$ gives 
$$|D^{\alpha}\mathcal{F}\tilde{\phi}(\xi) |=
\bigl|D^{\alpha}[\widehat{\Phi}* \widehat{\tau}](\xi)\bigr|\le C \llbracket \xi \rrbracket^{-2t-|\alpha|}$$
and the result follows.
\end{proof}

\paragraph{Mapping properties}
We now consider mapping properties of the induced integral operator -- for this we 
consider the scale of Besov spaces $B_{p,q}^s(\R^d)$.
What we present here can easily be modified
to handle Triebel-Lizorkin spaces $F_{p,q}^s(\R^d)$; 
 standard Sobolev spaces $W_p^s(\R^d)$ for $1<p<\infty$ are either Besov (for fractional spaces)
 or Triebel-Lizorkin (for integer spaces); the cases $p=1,\infty$  in the Triebel-Lizorkin scale are more difficult.

To define Besov spaces, we begin by fixing a spectral partition of unity.
Let $\psi:\R^d\to [0,\infty)$ be a bandlimited (Schwartz) test function with annular support
$\mathrm{supp}(\widehat{\psi})\subset B(0,2)\setminus B(0,1/2)$ and which satisfies
$\sum_{k=-\infty}^{\infty} \widehat{\psi}(2^k \xi) =1$ for all $\xi\in\R^{d}\setminus\{0\}$.
Define 
\begin{equation}
\label{fourier_multiplier}
\psi_k:=2^{dk} \psi(2^{k} \cdot)
\end{equation}
 to be a dyadic dilation of $\psi$. 
 Define $\omega \in \mathcal{S}(\R^d)$ by
 $\widehat{\omega}= 1-\sum_{k=1}^{\infty}\widehat{ \psi}(2^{-k}\cdot)$

\begin{definition}
For $s\in \R$ $p,q\in[1,\infty]$, the Besov space $B_{p,q}^{s}(\R^d)$ 
consists of tempered distributions, $f$, for which $f*\omega\in L_p$, each $\psi_k*f\in L_p$
and for which the norm
\begin{equation*}
\| f\|_{B_{p,q}^{s}(\R^d)}:= \|f*\omega\|_{L_p(\R^d)}+ \Bigl\|  k\mapsto 2^{ks} \|\psi_k*f\|_{L_p(\R^d)} \Bigr\|_{\ell_q(\N)}
\end{equation*}
is finite.  
\end{definition}

\begin{cor}
\label{besov}
Let $\tilde{\phi}$ be
the eccentrically smoothed, compactly supported RBF
of Proposition \ref{symbol}.
Then for any $p,q\in [1,\infty]$ 
and any $s\in \R$, the map
$$\tilde{\phi}*:
B_{p,q}^s(\R^d)\to B_{p,q}^{s+2t}(\R^d):
f\mapsto \tilde{\phi}*f
$$
is bounded.
\end{cor}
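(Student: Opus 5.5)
The approach is to treat $\tilde{\phi}*$ as the Fourier multiplier $m:=\widehat{\tilde{\phi}}$. By Proposition \ref{symbol}, $m$ is a smooth symbol (independent of $x$) satisfying $|D^{\alpha} m(\xi)|\le C_\alpha \llbracket\xi\rrbracket^{-2t-|\alpha|}$ for all $\alpha$. Since the spaces are defined via the Littlewood--Paley pieces $\psi_k*f$ and $\omega*f$, it suffices to show two things. First, $\|\psi_k*(\tilde{\phi}*f)\|_{L_p}\le C2^{-2tk}\|\tilde{\psi}_k*f\|_{L_p}$, where $\tilde{\psi}_k$ collects the neighbouring pieces $\psi_{k-1}+\psi_k+\psi_{k+1}$ (plus $\omega$ for small $k$). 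Second, a similar bound holds for the low-frequency part. Once these are in hand, multiplying by $2^{k(s+2t)}$ and taking $\ell_q$ norms gives the claim, at the cost of a constant from the finitely many overlapping shifts.

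First, I would fix the frequency-localization convention so that $\widehat{\psi_k}$ is supported on the annulus $|\xi|\sim 2^k$, which is what the norm definition intends for large $k$. I would then write $\psi_k*\tilde{\phi}*f = K_k*(\tilde{\psi}_k*f)$, where $\widehat{K_k}=m\,\widehat{\psi_k}$. This identity uses $\widehat{\tilde{\psi}_k}=1$ on $\supp\widehat{\psi_k}$, which holds by the partition of unity. By Young's inequality it then suffices to prove the uniform bound $\|K_k\|_{L_1}\le C2^{-2tk}$.

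The bound $\|K_k\|_{L_1}\le C2^{-2tk}$ is the main step. I would rescale: $K_k(x)=2^{dk}\kappa_k(2^kx)$ with $\widehat{\kappa_k}(\eta)=m(2^k\eta)\widehat{\psi}(\eta)$, which is supported in the fixed annulus $1/2\le|\eta|\le2$. The symbol estimates give $|D^\alpha_\eta[m(2^k\eta)]|\le C_\alpha 2^{k|\alpha|}\llbracket 2^k\eta\rrbracket^{-2t-|\alpha|}\le C_\alpha 2^{-2tk}$ on that annulus. Hence all derivatives of $\widehat{\kappa_k}$ up to order $d+1$ are bounded by $C2^{-2tk}$, uniformly in $k$, on a fixed compact set. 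Integrating by parts then yields $|\kappa_k(x)|\le C2^{-2tk}(1+|x|)^{-d-1}$, so $\|K_k\|_{L_1}=\|\kappa_k\|_{L_1}\le C2^{-2tk}$. The low-frequency term is handled directly: $\widehat{\omega}\,m$ is a smooth compactly supported function, hence Schwartz, so its inverse transform is in $L_1$ and gives $\|\omega*\tilde{\phi}*f\|_{L_p}\le C(\|\omega'*f\|_{L_p}+\|\psi_1*f\|_{L_p})$ for a suitable fattened $\omega'$.

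The only genuine input is Proposition \ref{symbol}; the rest is the standard argument that $S^{-2t}_{1,0}$ Fourier multipliers lift Besov smoothness by $2t$. The step I expect to need the most care is the uniformity in $k$ of the kernel bound. This is exactly where the full H\"ormander decay $\llbracket\xi\rrbracket^{-2t-|\alpha|}$ for all derivatives is used, and where a kernel with finite $\lambda_2$ would fail. I would also check that the endpoint cases $p,q\in\{1,\infty\}$ cause no trouble, which they do not, since only Young's inequality with an $L_1$ kernel is used.
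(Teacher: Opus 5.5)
Your argument is correct, but it takes a different route from the paper. The paper views $\tilde{\phi}*$ as a properly supported pseudodifferential operator with symbol in $S^{-2t}_{1,0}$ and composes it with the Bessel potential $J^{2t}$ to get an operator in $S^{0}_{1,0}$. It then cites Triebel's theorem that such operators are bounded on every $B^s_{p,q}(\R^d)$, and applies the lifting isomorphism $J^{-2t}:B^s_{p,q}(\R^d)\to B^{s+2t}_{p,q}(\R^d)$.

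You instead use that the symbol $m=\widehat{\tilde{\phi}}$ does not depend on $x$, so $\tilde{\phi}*$ commutes with the Littlewood--Paley pieces. You then prove the estimate by hand from the uniform dyadic kernel bound $\|K_k\|_{L_1}\le C2^{-2tk}$ and Young's inequality. The individual steps all check out:
the rescaling to the fixed annulus;
the reproducing identity with $\psi_{k-1}+\psi_k+\psi_{k+1}$, using $\omega+\psi_1+\psi_2$ when $k=1$;
the low-frequency term, where one can take $\omega'=\omega+\psi_1$, since $\widehat{\omega}(\widehat{\omega}+\widehat{\psi_1})=\widehat{\omega}$.

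Your version is self-contained and avoids both the composition calculus and the black-box boundedness theorem. It also handles $p,q\in\{1,\infty\}$ transparently. It uses the symbol estimates only for $|\alpha|\le d+1$, so your closing claim that the decay is needed ``for all derivatives'' is overstated. Whether a kernel with large but finite eccentric smoothness satisfies those finitely many bounds is a separate question that neither you nor the paper settles.

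The paper's version is shorter given the citation. As the paper remarks, it also carries over directly to Triebel--Lizorkin spaces $F^s_{p,q}$. There your blockwise Young's-inequality argument would not suffice on its own: the $\ell_q$ norm sits inside the $L_p$ norm, so one needs vector-valued maximal-function estimates.
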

\begin{proof}
Since $\tilde{\phi}*$ is proper, 
its composition  with other 
pseudodifferential operators 
is well-defined. 
In particular, for the
Bessel potential operator,
$J^{2t}:B_{p,q}^s(\R^d)
\mapsto B_{p,q}^{s-2t}(\R^d)$ 
defined by 
$(J^s f)^{\wedge} (\xi)
= 
(1+|\xi|^2)^{s/2}\widehat{f}(\xi)$
(see \cite[Lemma 6.2.1]{BL}
or \cite[Remark 3, Section 6.2.1]{Trieb})
we have that 
$J^{2t}\circ \tilde{\phi}*$ is a 
pseudodifferential operator in 
$S_{1,0}^{0}(\R^d).$ 

The result
follows as a direct consequence
of regularity properties of 
H{\"o}rmander class pseudodifferential
operators; see for instance
\cite[Theorem 6.2.2]{Trieb}:
since the composition
$J^{2t}\circ \tilde{\phi}*$ 
is a pseudodifferential operator
in $S_{1,0}^0$, the map
$J^{2t}\circ \tilde{\phi}*:B_{p,q}^{s}(\R^d)\to B_{p,q}^s(\R^d)$ 
is continuous,
the composition
$\phi*  =J^{-2t} \circ J^{2t}\circ \tilde{\phi}*: B_{p,q}^{s}(\R^d)\to B_{p,q}^{s+2t}(\R^d)$
is continuous as well.
\end{proof}

Let $\mu$ be a finite, signed Borel measure on $\R^d$, with Jordan decomposition $\mu= \mu^+-\mu^-$,
 total variation measure $|\mu| =  \mu^+ + \mu^-$ and total variation norm $\|\mu\|_{TV} = |\mu|(\R^d)$.
Then  Young's theorem gives 
$\|\mu*f\|_p\le \|\mu\|_{TV} \|f\|_p$.
This implies $\|\mu *\psi_k\|_p\le 2^{kd/p'}\|\psi\|_p \|\mu\|_{TV}$,
where $p'$ is the conjugate exponent to $p$, 
i.e., $1/p+1/p'=1$. Thus $\mu\in B_{p,\infty}^{-d/p'}(\R^d)$ and 
\begin{equation}
\label{finite_measure_besov}
 \|\mu\|_{B_{p,\infty}^{-d/p'} (\R^d)}\le C_{\psi} \|\mu\|_{TV} 
\end{equation}
holds with a constant
$C_{\psi}$ depending on the
spectral
partition of unity 
induced by $\psi$.

\begin{cor}
Let $\tilde{\phi}$ be
the eccentrically smoothed, compactly supported RBF
of Proposition \ref{symbol}.
If $\mu$ is a finite, signed Borel measure on $\R^d$
then the integral operator
$$\mathcal{L}_{\tilde{\phi}}:L_p(\mu) \to S'(\R^d): f\mapsto \int_{\R^d} f(y) \tilde{\phi}(\cdot-y)\dif \mu(y)$$
has range in $B_{p,\infty}^{2t-d/p'} (\R^d)$ and
$\mathcal{L}_{\tilde{\phi}}: L_p(\mu) \to B_{p,\infty}^{2t-d/p'} (\R^d)$ is continuous.
\end{cor}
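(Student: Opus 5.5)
The plan is to write $\mathcal{L}_{\tilde{\phi}} f = \tilde{\phi} * (f\mu)$, where $f\mu$ is the signed measure $\dif(f\mu) = f\,\dif\mu$, and then combine Corollary \ref{besov} with the embedding (\ref{finite_measure_besov}) of finite measures into a negative-order Besov space. The first step is to check that $f\mu$ is a finite signed Borel measure whenever $f\in L_p(\mu)$. Here $L_p(\mu)$ should be read as $L_p(|\mu|)$. Since $|\mu|$ is finite, H\"older's inequality gives
$$\|f\mu\|_{TV} = \int |f|\,\dif|\mu| \le |\mu|(\R^d)^{1/p'}\|f\|_{L_p(|\mu|)} = \|\mu\|_{TV}^{1/p'}\|f\|_{L_p(\mu)}.$$
This holds for all $p\in[1,\infty]$, with the convention $1/p'=0$ when $p=1$. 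Hence $f\mapsto f\mu$ is a bounded linear map from $L_p(\mu)$ into the space of finite signed measures with the total variation norm.

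Next we identify the integral operator with a convolution. For each $x$, $\int f(y)\tilde{\phi}(x-y)\dif\mu(y) = (\tilde{\phi}*(f\mu))(x)$, since $\tilde\phi$ is continuous and bounded and $f\mu$ is finite. This is a bounded continuous function, so it defines a tempered distribution. It coincides with the distributional convolution of the compactly supported $\tilde\phi$ with the tempered distribution $f\mu$. To confirm this, pair both sides with a Schwartz test function and apply Fubini, which is justified because $|\tilde\phi|\le \|\tilde\phi\|_\infty$ and the measures are finite.

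The final step chains the estimates. Estimate (\ref{finite_measure_besov}) gives $\|f\mu\|_{B^{-d/p'}_{p,\infty}}\le C_\psi\|f\mu\|_{TV}$. Corollary \ref{besov}, applied with $s=-d/p'$ and $q=\infty$, shows that $\tilde\phi*$ maps $B^{-d/p'}_{p,\infty}(\R^d)$ boundedly into $B^{2t-d/p'}_{p,\infty}(\R^d)$. Together with the H\"older bound this yields
$$\|\mathcal{L}_{\tilde\phi}f\|_{B^{2t-d/p'}_{p,\infty}} \le C\, C_\psi\, \|\mu\|_{TV}^{1/p'}\,\|f\|_{L_p(\mu)}.$$
This gives both the range inclusion and continuity.

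The main obstacle is the identification step, namely making sure the convolution in Corollary \ref{besov}, which is defined on distributions via the pseudodifferential calculus, agrees with the pointwise integral. I would handle it by the Fubini pairing argument above, using that $\tilde\phi$ is a compactly supported continuous function. Both sides are then convolutions of a compactly supported distribution with a tempered one, and they agree on test functions.
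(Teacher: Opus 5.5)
Your proposal is correct and follows the paper's proof essentially step for step. It uses the H\"older bound $\|f\mu\|_{TV}\le \|\mu\|_{TV}^{1/p'}\|f\|_{L_p(\mu)}$, the embedding (\ref{finite_measure_besov}) into $B^{-d/p'}_{p,\infty}(\R^d)$, the identification $\mathcal{L}_{\tilde{\phi}}f=\tilde{\phi}*(f\mu)$, and Corollary \ref{besov} with $s=-d/p'$, $q=\infty$; your Fubini pairing argument usefully justifies the identification step, which the paper only asserts.
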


\begin{proof}
Since
$\mu\in B_{p,\infty}^{-d/p'}(\R^d)$, 
where $p' = (1-1/p)^{-1}$,
it follows that $L_p(\mu)\subset B_{p,\infty}^{-d/p'}(\R^d)$ as well,
in the sense that 
$
f\times \mu:
U\mapsto \int_{U} f(x) \dif \mu(x)
$ 
is a finite measure  
and
$
\|f\times \mu\|_{TV} 
\le 
\|f\|_{L_p(\mu)} \|\mu\|_{TV}^{1/p'}
$. 
Indeed, for any $g\in C_0(\R^d)$,
$$
\left|
\int_{\R^d} g(x) f(x) 
\dif \mu(x)
\right|
\le 
\|g\|_{\sup} 
\int_{\R^d}
|f(x)|\, 
\dif |\mu|(x)
\le 
\|g\|_{\sup} \|f\|_{L_p(\mu)} \bigl(
|\mu|(\R^d)
\bigr)^{1/p'}.
$$
Consequently,
$\|f\times \mu\|_{B_{p,\infty}^{-d/p'}(\R^d)} 
\le 
C_{\psi}\|f\times \mu\|_{TV} 
\le 
C_{\psi}
\|f\|_{L_p(\mu)} \|\mu\|_{TV}^{1/p'}$
holds.

It follows that $f\times \mu$ 
is a tempered distribution and 
 that 
$
\tilde{\phi}* (f\times \mu) 
 = 
\int_{\R^d} f(y) \tilde{\phi}(x-y)
 \dif \mu(y)
$.
 By Corollary \ref{besov},
$$
\left\|
\int_{\R^d} f(y) 
\tilde{\phi}(\cdot-y) 
\dif \mu(y)
\right\|_{B_{p,\infty}^{2t-d/p'} (\R^d)}
\le 
C_{\psi}
\|f\times \mu\|_{B_{p,\infty}^{-d/p'}(\R^d)}
\le {C} \|f\|_{L_p(\mu)}  \|\mu\|_{TV}^{1/p'},$$
where
$C$ incorporates
the constants from (\ref{finite_measure_besov})
and the operator
norm 
$\|\tilde{\phi}*\|_{B_{p,\infty}^{-d/p'}(\R^d) 
\to B_{p,\infty}^{2t-d/p'}(\R^d)}$.
\end{proof}

The above situation may be improved for specific $\mu$ (for
instance if $\mu$ is Hausdorff measure of some compact set); in that case the range may be in a 
smaller Besov space with smoothness index greater than $2t-d/p'$ -- \cite{HRW}.
Secondly, if $\mu$ is a Borel measure with infinite variation, then the above holds for $p=1$ (since
in that case, $f\times \mu$ is a finite measure -- this is the case for Lebesgue measure).


\bibliographystyle{plain}
\bibliography{literature}

\begin{thebibliography}{10}

\bibitem{Johnson}
Amal Al-Rashdan and Michael~J. Johnson.
\newblock Minimal degree univariate piecewise polynomials with prescribed
  sobolev regularity.
\newblock {\em Journal of Approximation Theory}, 164(1):1--5, 2012.

\bibitem{AKMW}
Sara Avesani, R\"udiger Kempf, Michael Multerer, and Holger Wendland.
\newblock Multiscale {S}cattered {D}ata {A}nalysis in {S}amplet {C}oordinates.
\newblock {\em SIAM J. Sci. Comput.}, 47(5):A3038--A3063, 2025.

\bibitem{BL}
J\"oran Bergh and J\"orgen L\"ofstr\"om.
\newblock {\em Interpolation spaces. {A}n introduction}, volume No. 223 of {\em
  Grundlehren der Mathematischen Wissenschaften}.
\newblock Springer-Verlag, Berlin-New York, 1976.

\bibitem{Berlinet}
Alain Berlinet and Christine Thomas-Agnan.
\newblock {\em Reproducing kernel {H}ilbert spaces in probability and
  statistics}.
\newblock Kluwer Academic Publishers, Boston, MA, 2004.
\newblock With a preface by Persi Diaconis.

\bibitem{Buhmann}
M.~D. Buhmann.
\newblock Radial functions on compact support.
\newblock {\em Proc. Edinburgh Math. Soc. (2)}, 41(1):33--46, 1998.

\bibitem{ChHub}
Andrew Chernih and Simon Hubbert.
\newblock Closed form representations and properties of the generalised
  {W}endland functions.
\newblock {\em J. Approx. Theory}, 177:17--33, 2014.

\bibitem{CS}
Felipe Cucker and Steve Smale.
\newblock On the mathematical foundations of learning.
\newblock {\em Bull. Amer. Math. Soc. (N.S.)}, 39(1):1--49, 2002.

\bibitem{Grafakos_M}
Loukas Grafakos.
\newblock {\em Modern {F}ourier analysis}, volume 250 of {\em Graduate Texts in
  Mathematics}.
\newblock Springer, New York, third edition, 2014.

\bibitem{Hackbusch}
Wolfgang Hackbusch.
\newblock {\em Hierarchical matrices: algorithms and analysis}, volume~49 of
  {\em Springer Series in Computational Mathematics}.
\newblock Springer, Heidelberg, 2015.

\bibitem{HR-Extending}
T.~Hangelbroek and C.~Rieger.
\newblock Extending error bounds for radial basis function interpolation to
  measuring the error in higher order {S}obolev norms.
\newblock {\em Math. Comp.}, 94(351):381--407, 2025.

\bibitem{HRW}
T.~Hangelbroek, C.~Rieger, and G.~B. Wright.
\newblock Kernel approximation beyond the native space -- with applications to
  approximation on manifolds, 2026.
\newblock preprint arXiv:2606.28564.

\bibitem{HMSS}
H.~Harbrecht, M.~Multerer, O.~Schenk, and Ch. Schwab.
\newblock Multiresolution kernel matrix algebra.
\newblock {\em Numer. Math.}, 156(3):1085--1114, 2024.

\bibitem{HM}
Helmut Harbrecht and Michael Multerer.
\newblock Samplets: construction and scattered data compression.
\newblock {\em J. Comput. Phys.}, 471:Paper No. 111616, 23, 2022.

\bibitem{HubJag}
Simon Hubbert and Janin J\"ager.
\newblock Closed form representations for the compactly supported radial basis
  functions of {B}uhmann, {W}endland and {W}u.
\newblock {\em Adv. Comput. Math.}, 51(6):Paper No. 48, 30, 2025.

\bibitem{LGZ}
Shao-Bo Lin, Xin Guo, and Ding-Xuan Zhou.
\newblock Distributed learning with regularized least squares.
\newblock {\em J. Mach. Learn. Res.}, 18:Paper No. 92, 31, 2017.

\bibitem{platte:2015}
Rodrigo~B. Platte.
\newblock {$C^{\infty}$} compactly supported and positive definite radial
  kernels.
\newblock {\em SIAM Journal on Scientific Computing}, 37(4):A1934--A1956, 2015.

\bibitem{schaback_doubling}
R.~Schaback.
\newblock Improved error bounds for scattered data interpolation by radial
  basis functions.
\newblock {\em Math. Comp.}, 68(225):201--216, 1999.

\bibitem{Schaback}
Robert Schaback.
\newblock The missing {W}endland functions.
\newblock {\em Adv. Comput. Math.}, 34(1):67--81, 2011.

\bibitem{Schaback:2026}
Robert Schaback.
\newblock Kernel construction techniques.
\newblock {\em Engineering Analysis with Boundary Elements}, 188:106782, 2026.

\bibitem{Taylor2}
Michael~E. Taylor.
\newblock {\em Partial differential equations {II}. {Q}ualitative studies of
  linear equations}, volume 116 of {\em Applied Mathematical Sciences}.
\newblock Springer, Cham, third edition, 2023.

\bibitem{Trieb}
Hans Triebel.
\newblock {\em Theory of function spaces. {II}}, volume~84 of {\em Monographs
  in Mathematics}.
\newblock Birkh\"auser Verlag, Basel, 1992.

\bibitem{Wend}
Holger Wendland.
\newblock Piecewise polynomial, positive definite and compactly supported
  radial functions of minimal degree.
\newblock {\em Adv. Comput. Math.}, 4(4):389--396, 1995.

\bibitem{Wendland_book}
Holger Wendland.
\newblock {\em Scattered data approximation}, volume~17 of {\em Cambridge
  Monographs on Applied and Computational Mathematics}.
\newblock Cambridge University Press, Cambridge, 2005.

\bibitem{Wu}
Zong~Min Wu.
\newblock Compactly supported positive definite radial functions.
\newblock {\em Adv. Comput. Math.}, 4(3):283--292, 1995.

\end{thebibliography}

\end{document}